\documentclass[11pt]{article}

\usepackage{amsmath,amssymb,amsthm,mathtools}
\usepackage{bm}
\usepackage{booktabs}
\usepackage{array}
\usepackage{enumitem}
\usepackage{graphicx}
\usepackage[hidelinks]{hyperref}
\usepackage[a4paper,left=2.5cm,right=2.5cm,top=3cm,bottom=3cm]{geometry}

\newtheorem{theorem}{Theorem}[section]
\newtheorem{proposition}[theorem]{Proposition}
\newtheorem{lemma}[theorem]{Lemma}
\newtheorem{remark}[theorem]{Remark}

\newcommand{\R}{\mathbb{R}}
\newcommand{\E}{\mathbb{E}}

\newcommand{\D}{\mathrm{d}}
\newcommand{\Var}{\operatorname{Var}}
\newcommand{\tr}{\operatorname{tr}}

\newcommand{\edist}{\stackrel{{\cal D}}{=}}
\newcommand{\vertk}{\stackrel{{\cal D}}{\rightarrow}}

\title{Four-Moment Approximations and Fast $p$-Values
for BHEP Tests of Multivariate Normality}

\author{Bruno Ebner \and Norbert Henze}

\date{}

\author{Bruno Ebner\footnote{Institute of Stochastics, Karlsruhe Institute of Technology (KIT), 76131 Karlsruhe, Germany. e-mail: bruno.ebner@kit.edu} \and Norbert Henze\footnote{Institute of Stochastics, Karlsruhe Institute of Technology (KIT), 76131 Karlsruhe, Germany. e-mail:
henze@kit.edu}}

\begin{document}

\maketitle

\begin{abstract}
The Baringhaus--Henze--Epps--Pulley (BHEP) tests form a widely applicable class of affine invariant and consistent tests for multivariate normality.  Under the null hypothesis, the test statistic converges to a weighted sum of independent chi-squared random variables. Although closed-form expressions for the first three cumulants of this limiting distribution are known for arbitrary dimension $d$ and smoothing parameter $\beta$, an explicit expression for the fourth cumulant has so far been available only in a special univariate case. We derive the fourth cumulant for arbitrary $d\geq1$ and $\beta>0$ and use the resulting first four moments to construct Johnson and Pearson approximations to the limiting null distribution. These yield essentially instantaneous analytic approximations to BHEP $p$-values, without numerical eigenvalue calculations. Using the recently obtained complete spectrum as a benchmark, we show that both four-moment approximations are highly accurate over a broad range of dimensions, smoothing parameters and upper-tail probabilities, and substantially improve on two- and three-parameter lognormal approximations. Monte Carlo results indicate good finite-sample calibration for many parameter combinations, although convergence to the limiting distribution may be slow in higher dimensions for extreme values of the smoothing parameter.
\end{abstract}

\medskip
\noindent
\textbf{Keywords:}
BHEP test; Johnson system; Pearson system; multivariate normality; quadratic Gaussian
functional; cumulants; moment matching; $p$-value approximation.

\medskip
\noindent
\textbf{2020 Mathematics Subject Classification.}
Primary 62H15; Secondary 62E20, 62E17.

\section{Introduction}

Tests of multivariate normality based on the empirical characteristic
function have several attractive features. In particular, the
Baringhaus--Henze--Epps--Pulley (BHEP) class of tests is affine
invariant, consistent against every fixed nonnormal alternative, and
applicable in arbitrary dimension and for arbitrary sample size,
subject only to nonsingularity of the sample covariance matrix (see
\cite{baringhaushenze1988,ebnerhenze20,eppspulley1983,henze2002,henzewagner1997,henzezirkler1990}).

Let $X_1,\ldots,X_n$ be independent copies of a $d$-dimensional random
(column) vector, and let $\overline X_n=\frac1n\sum_{j=1}^n X_j$ and
$ S_n=n^{-1} \sum_{j=1}^n   (X_j-\overline X_n)(X_j-\overline X_n)^\top
$ denote the sample mean and empirical covariance matrix, respectively.
We assume throughout that $n \ge d+1$ and that the distribution of $X_1$ is 
absolutely continuous with respect to Lebesgue measure, which entails that $S_n$ is
nonsingular with probability one (see, e.g., \cite{eatonperl73}).
Put
\[
 Y_j=S_n^{-1/2}(X_j-\overline X_n),
 \qquad j=1,\ldots,n,
\]
where $S_n^{-1/2}$ denotes the symmetric positive definite square root
of $S_n^{-1}$.

For $\beta>0$, let
\[
 \varphi_\beta(t)
 =  (2\pi\beta^2)^{-d/2}
 \exp\left(-\frac{\|t\|^2}{2\beta^2}\right),
 \qquad t\in\R^d,
\]
where $\| \cdot \|$ denotes the Euclidean norm on $\mathbb{R}^d$. 

The BHEP statistic depends on a smoothing parameter $\beta>0$ and is
based on a weighted $L^2$-distance between the empirical characteristic
function of suitably standardized observations and the characteristic
function of the standard $d$-variate normal distribution. Under the
hypothesis of normality, its limiting distribution is a quadratic
Gaussian functional and, equivalently, a weighted sum of independent
chi-squared random variables. A complete characterization of the
corresponding eigenvalues has only recently become available; some of
them are determined as roots of transcendental equations and of a
Fredholm determinant; see \cite{ebneretal26}. The precise definitions and the
spectral representation are recalled in Section~\ref{secfirstfour}.

A statistical test requires either critical values or,
preferably, a $p$-value corresponding to the observed value of
the BHEP statistic. Simulation provides one possibility, but it must either
be performed repeatedly or be replaced by extensive tables. An
analytic approximation to the null distribution is therefore
desirable.

Henze and Wagner \cite{henzewagner1997} derived explicit expressions
for the expectation, variance and third central moment of
the limiting null distribution for arbitrary $d$ and $\beta$. In the particular case
$d=1$ and $\beta=1$, Henze \cite{henze1990} had previously obtained
the first four moments and used them to fit distributions from the
Johnson and Pearson systems. The Johnson approximation provided
accurate upper quantiles and led to a simple normalizing
transformation by which approximate $p$-values could be computed.

The purpose of the present paper is to extend this idea to the full
BHEP family. Our contributions are threefold. First, we derive the
fourth cumulant of the limiting null distribution in closed form for arbitrary
$d\geq1$ and $\beta>0$. Second, we use the resulting first four
moments to construct Johnson and Pearson approximations to the limiting null
distribution and derive  direct analytic approximations to the
$p$-values of the BHEP statistic. Third, we investigate numerically
the accuracy of these $p$-values, both for the limiting distribution
and for finite sample sizes.

The explicit fourth moment obtained here turns the available analytic information about the BHEP limit law into a practical method for computing approximate $p$-values. The recent complete spectral characterization of the BHEP covariance operator also makes it possible to compute highly accurate reference values for the limiting distribution. This provides a natural benchmark for assessing the accuracy of the moment-based approximations proposed here.

The remainder of the paper is organized as follows.
Section~\ref{secfirstfour} reviews the limiting null distribution of the BHEP statistic and the first three cumulants, and derives the fourth cumulant in closed
form. Section~\ref{rem:spectral_power_sum} develops the Johnson and Pearson four-moment approximations and recalls the lognormal benchmark approximations. 
Section~\ref{sec:limit_accuracy} assesses the accuracy of these approximations using a highly accurate spectral reference distribution. Section~\ref{sec:finite_sample} investigates their
finite-sample calibration by Monte Carlo simulation, and Section~\ref{sec:discussion} concludes with a discussion. The implementation is described in the software availability statement, and the proof of the fourth-cumulant
formula is given in Appendix~A.

\section{The BHEP limit law and first four cumulants}
\label{secfirstfour}

With the notation introduced above, let
\[
\psi_n(t) = \frac{1}{n}\sum_{j=1}^n \exp\bigl({\rm i}t^\top Y_j\bigr),
\qquad t\in\mathbb{R}^d, 
\]
where ${\rm i}$ denotes the imaginary unit. Define
\[
T_{n,\beta} = n\int_{\mathbb{R}^d} \left| \psi_n(t)-\exp(-\|t\|^2/2) \right|^2
\varphi_\beta(t)\, \D t.
\]
Writing $\vertk$ for convergence in distribution and $\edist$ for equality in distribution, under the hypothesis that the observations have a nondegenerate
$d$-variate normal distribution,
\[
T_{n,\beta}\vertk T_\beta(d),
\]
where
\[
 T_\beta(d)  =  \int_{\R^d}Z^2(t)\varphi_\beta(t)\, \D t
\]
and $Z$ is a centered Gaussian process with covariance kernel
\begin{equation*}
 K(s,t)
 =  \exp\left(-\frac{\|s-t\|^2}{2}\right) -  \left\{  1+s^\top t+\frac{(s^\top t)^2}{2}  \right\}
 \exp\left( -\frac{\|s\|^2+\|t\|^2}{2}  \right).
\end{equation*}
Let $A=A_{\beta,d}$ denote the integral operator on $L^2(\mathbb{R}^d,\varphi_\beta(t) \D t)$ defined by
\[
 (Aq)(s)  =  \int_{\R^d}  K(s,t)q(t)\varphi_\beta(t)\, \D t.
\]
If $(\lambda_j)_{j\geq1}$ are its nonzero eigenvalues, then
\begin{equation}\label{eq:chisquare_rep}
T_\beta(d) \ \edist \ \sum_{j\ge 1} \lambda_j N_j^2,     
\end{equation}
where $N_1, N_2,\ldots$ are independent standard normal random variables. Next, for $r\ge 1$, let $\kappa_r=\kappa_r(d,\beta)$ denote the $r$th
cumulant of $T_\beta(d)$.

\begin{proposition}\label{prop:cumulants}
For each integer $r\geq1$,
\begin{equation}
 \kappa_r  =  2^{r-1}(r-1)!  \sum_{j\geq1}\lambda_j^r
 =  2^{r-1}(r-1)!\tr(A^r),
\label{eq:cumulants_trace}
\end{equation}
where ${\rm tr}$ denotes trace. Consequently,
\begin{align}
 \kappa_r
 &=  2^{r-1}(r-1)!  \int_{(\R^d)^r}  K(t_1,t_2)K(t_2,t_3)\cdots K(t_r,t_1)
 \nonumber\\ 
 &\hspace{35mm}\times  \prod_{\ell=1}^r\varphi_\beta(t_\ell)
 \,\D t_1\cdots \D t_r. 
\label{eq:cumulants_integral}
\end{align}
\end{proposition}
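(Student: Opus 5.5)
We first compute the cumulant generating function of the series representation \eqref{eq:chisquare_rep}. Each $N_j^2$ is $\chi^2_1$, so for $|\theta|<1/(2\max_j\lambda_j)$ we have
\[
\log \E \exp(\theta\lambda_jN_j^2)=-\tfrac12\log(1-2\theta\lambda_j).
\]
We then expand this as $\sum_{r\ge1}(2\theta\lambda_j)^r/(2r)$.

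Next we sum over $j$. Interchanging sums requires absolute convergence, and this holds because $A$ is a positive trace-class operator. Indeed, $K$ is a continuous covariance kernel and $\int K(t,t)\varphi_\beta(t)\,\D t<\infty$, since $K$ is bounded. Hence $\lambda_j\ge0$ and $\sum_j\lambda_j=\int K(t,t)\varphi_\beta(t)\,\D t<\infty$.

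Independence of the $N_j$ and monotone/dominated convergence then give the cumulant generating function of $T_\beta(d)$:
\[
\sum_{r\ge1}\frac{2^{r-1}\theta^r}{r}\sum_j\lambda_j^r .
\]
Reading off the coefficient of $\theta^r/r!$ yields $\kappa_r=2^{r-1}(r-1)!\sum_j\lambda_j^r$. We also note that $\sum_j\lambda_j^r\le(\max\lambda_j)^{r-1}\sum_j\lambda_j<\infty$.

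For the trace identity, $A$ is self-adjoint, nonnegative and compact on $L^2(\varphi_\beta)$. By the spectral theorem, $A^r$ has eigenvalues $\lambda_j^r$ and is trace class, so $\tr(A^r)=\sum_j\lambda_j^r$. The zero eigenvalues contribute nothing.

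The main obstacle is the integral formula \eqref{eq:cumulants_integral}. Here $A^r$ is the integral operator with the iterated kernel
\[
K^{(r)}(s,t)=\int K(s,t_2)K(t_2,t_3)\cdots K(t_r,t)\prod_{\ell=2}^r\varphi_\beta(t_\ell)\,\D t_2\cdots\D t_r,
\]
where we apply Fubini using boundedness of $K$. We must justify that $\tr(A^r)=\int K^{(r)}(t,t)\varphi_\beta(t)\,\D t$.

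For $r\ge2$, the cleanest route is to write $A^r=A^{r-1}A$ and use the Hilbert--Schmidt inner product: for Hilbert--Schmidt operators $B,C$ with kernels $b,c$, we have $\tr(BC)=\int\!\!\int b(s,t)c(t,s)\,\varphi_\beta(s)\varphi_\beta(t)\,\D s\,\D t$. Since $K$ is bounded and $\varphi_\beta$ is a probability density, all kernels are square integrable, and this gives the cyclic integral directly.

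For $r=1$, we use Mercer's theorem for the continuous positive semidefinite kernel $K$ on the finite measure space. Alternatively, we expand $K(s,t)=\sum_j\lambda_je_j(s)e_j(t)$ in $L^2$ and invoke the standard result that the trace of a positive trace-class integral operator with continuous kernel equals the integral of its diagonal.
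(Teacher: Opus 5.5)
Your proposal is correct and follows essentially the same route as the paper. The paper obtains $\kappa_r=\sum_{j}\kappa_r(\lambda_jN_j^2)$ from independence and the $\chi^2_1$ cumulants $2^{r-1}(r-1)!$, and then simply calls the trace and cyclic-integral forms standard consequences of the integral representation of $A$. Your cumulant-generating-function expansion, the trace-class identity $\sum_j\lambda_j=\int K(t,t)\varphi_\beta(t)\,\D t<\infty$, and the Hilbert--Schmidt formula for $\tr(A^{r-1}A)$ supply exactly the justifications the paper leaves implicit: the interchange over infinitely many summands, and the identification of $\tr(A^r)$ with the cyclic integral.
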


\smallskip
\begin{proof}
Representation \eqref{eq:chisquare_rep} and independence of the $N_j$ yield
\[
 \kappa_r  =  \sum_{j\ge 1}  \kappa_r(\lambda_jN_j^2).
\]
Since the $r$th cumulant of a chi-squared random variable with one
degree of freedom equals $2^{r-1}(r-1)!$,
the first equality in \eqref{eq:cumulants_trace} follows. The trace
representation and \eqref{eq:cumulants_integral} are standard
consequences of the integral representation of $A$.
\end{proof}
The first three cumulants are $\kappa_1=\E[T_\beta(d)]$,
 $\kappa_2=\Var(T_\beta(d))$, and
\[
 \kappa_3
 =  \E\left[  \{T_\beta(d)-\E T_\beta(d)\}^3  \right].
\]
For completeness, we recall the explicit expressions for these
cumulants obtained in \cite{henzewagner1997}. To this end,
\[
 a_\beta=1+2\beta^2,\qquad
 b_\beta=1+4\beta^2+3\beta^4,\qquad
 c_\beta=1+4\beta^2+2\beta^4.
\]
Then
\begin{align*}
\kappa_1(d,\beta)
&= 1-a_\beta^{-d/2} \left[ 1+\frac{d\beta^2}{a_\beta} +\frac{d(d+2)\beta^4}{2a_\beta^2}
\right],
\\[1mm]
\kappa_2(d,\beta)
&= 2(1+4\beta^2)^{-d/2} + 2a_\beta^{-d} \left[
1+\frac{2d\beta^4}{a_\beta^2} +\frac{3d(d+2)\beta^8}{4a_\beta^4} \right]
\nonumber\\
&\quad
- 4b_\beta^{-d/2}  \left[1+\frac{3d\beta^4}{2b_\beta}
+\frac{d(d+2)\beta^8}{2b_\beta^2} \right],
\\[1mm]
\kappa_3(d,\beta)
&= 8(1+3\beta^2)^{-d} \nonumber\\
&\quad
- 12(a_\beta c_\beta)^{-d/2} \left[ 2+\frac{d\beta^4}{a_\beta^2}
+\frac{2d\beta^6}{a_\beta c_\beta} +\frac{2d\beta^8}{a_\beta^2c_\beta}
+\frac{d(d+2)\beta^{12}}{a_\beta^2c_\beta^2} \right]
\nonumber\\
&\quad
+ 6(a_\beta b_\beta)^{-d/2} \left[ 4+\frac{4d\beta^4}{b_\beta}
+\frac{4d\beta^6}{a_\beta b_\beta} +\frac{d(d\! +\! 2)\beta^8}{a_\beta^2b_\beta}
+\frac{3d(d\! + \! 2)\beta^{12}}{a_\beta^2b_\beta^2} 
\right]
\nonumber\\
&\quad
-
a_\beta^{-3d/2}
\left[
8+\frac{12d\beta^4}{a_\beta^2} +\frac{8d\beta^6}{a_\beta^3}
+\frac{6d(d\! + \! 2)\beta^8}{a_\beta^4} +
\frac{d(d\! +\! 2)(d\! +\! 8)\beta^{12}}{a_\beta^6} \right].
\end{align*}

For the fourth cumulant, Proposition~\ref{prop:cumulants} gives
\begin{align*}
 \kappa_4(d,\beta)
 &=  48  \int_{(\R^d)^4}  K(s,t)K(t,u)K(u,v)K(v,s)
 \nonumber\\
 &\qquad\qquad\times  \varphi_\beta(s)\varphi_\beta(t)
 \varphi_\beta(u)\varphi_\beta(v)  \,\D s\,\D t\,\D u\, \D v.
\end{align*}

The following result is the main analytic contribution of this paper.

\medskip

\begin{theorem}\label{thm:kappa4}
For every $d\geq 1$ and $\beta>0$, the fourth cumulant of
$T_\beta(d)$ is
\begin{align*}
\kappa_4(d,\beta) = 48\Bigl\{ &D_4^{-d/2} -4D_3^{-d/2}P_3 +4D_{2a}^{-d/2}P_{2a}
\nonumber\\
&+2D_{2o}^{-d/2}P_{2o} -4D_1^{-d/2}P_1 +D_0^{-d/2}P_0 \Bigr\},
\end{align*}
where the quantities $D_4,D_3,D_{2a},D_{2o},D_1,D_0$ and
$P_3,P_{2a},P_{2o},P_1,P_0$ are given explicitly in
Appendix~\ref{sec:appendix_kappa4}. Consequently,
\[
\mu_4(d,\beta) = \E\left[ \big(T_\beta(d)- \E T_\beta(d)\big)^4\right] = \kappa_4(d,\beta) + 3 \kappa_2(d,\beta)^2.
\]
\end{theorem}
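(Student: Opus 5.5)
The plan is to start from the integral representation of $\kappa_4$ given by Proposition~\ref{prop:cumulants} and expand each of the four kernels. We write $K(s,t)=E(s,t)-Q(s,t)$, where
\[
E(s,t)=\exp(-\|s-t\|^2/2), \qquad Q(s,t)=\{1+s^\top t+(s^\top t)^2/2\}\exp\{-(\|s\|^2+\|t\|^2)/2\}.
\]
Expanding $K(s,t)K(t,u)K(u,v)K(v,s)$ gives $16$ products. Up to the cyclic and reflection symmetry of the $4$-cycle, these fall into six classes. The first class is all four factors of type $E$ (one term). Next come three $E$'s and one $Q$ (four terms, sign $-$). Two $Q$'s can sit either on adjacent edges (four terms, sign $+$) or on opposite edges (two terms, sign $+$). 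Then come three $Q$'s (four terms, sign $-$), and finally four $Q$'s (one term). This already explains the coefficients $1,-4,4,2,-4,1$ in the statement, and the remaining task is to compute one representative integral per class.

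Each representative is a Gaussian integral over $(\R^d)^4$ with a polynomial prefactor. The exponent is a quadratic form $-\tfrac12 x^\top (M\otimes I_d)x$ in $x=(s,t,u,v)$, where $M$ is a $4\times4$ matrix depending on $\beta$ and on the class. Because the exponent factorizes over the $d$ coordinates, the Gaussian normalization yields $(\det M\cdot\beta^{8})^{-d/2}$ up to constants. This is the source of the $D_\bullet^{-d/2}$ factors: $D_4$ is the determinant for the pure-$E$ cycle, and in the pure-$Q$ case the integral factorizes into chained one-edge integrals. The polynomial prefactor is a product of factors $1+x^\top y+(x^\top y)^2/2$ on the $Q$-edges. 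We expand it into monomials in inner products and evaluate their Gaussian expectations by Isserlis/Wick, using $\E[(X^\top Y)(Y^\top Z)]$-type identities. With a covariance $\Sigma\otimes I_d$, every pairing contributes products of entries of $\Sigma=M^{-1}$ times powers of $d$; traces over the $d$ coordinates give factors like $d$ and $d(d+2)$, and in the fourth-order case $d(d+2)(d+8)$-type terms. Collecting these gives the polynomials $P_\bullet$ in $d$ with coefficients rational in $\beta^2$.

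A useful simplification is that a $Q$-edge is a rank-one-like separable kernel in the Hermite sense: $Q(s,t)=\sum_{k=0}^{2}(s^{\otimes k}\cdot t^{\otimes k})e^{-\|s\|^2/2}e^{-\|t\|^2/2}/k!$. So any chain of $E$'s between two $Q$-endpoints reduces to a Gaussian convolution of $E$ with $\varphi_\beta$, which is itself Gaussian with explicit parameters. For the three-$Q$ and four-$Q$ classes I would therefore integrate out vertices sequentially, each step producing a Gaussian in the remaining variables. This mirrors the structure of the $\kappa_3$ formula of \cite{henzewagner1997}, whose $a_\beta,b_\beta,c_\beta$ should reappear as building blocks. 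As a consistency check, the same method applied to the $3$-cycle must reproduce the stated $\kappa_3$.

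The main obstacle I expect is the Wick bookkeeping in the classes with two or more $Q$'s, particularly the adjacent two-$Q$ and three-$Q$ classes. There, quadratic inner-product factors sharing a vertex produce many pairings, and the $d$-dependence, e.g. $\E[(X^\top Y)^2(Y^\top Z)^2]$ involving $\E\|Y\|^4$ and fourth-moment contractions, must be tracked carefully. To control this I would first integrate out the degree-two vertices analytically, reducing everything to moments of at most two Gaussian vectors. I would then verify the resulting closed forms numerically, against the trace $\sum\lambda_j^4$ from the spectrum of \cite{ebneretal26} and against the known case $d=\beta=1$ of \cite{henze1990}. Finally, $\mu_4=\kappa_4+3\kappa_2^2$ is the standard cumulant–central-moment relation.
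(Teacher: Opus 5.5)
Your proposal is correct and shares the paper's skeleton: the split $K=E-Q$, the $16$ products, the six symmetry classes with weights $1,-4,4,2,-4,1$, and the identification of each $D_\bullet$ with the determinant of the $4\times4$ Gaussian quadratic form (your $\beta^8\det M$ is the paper's $\det C(q)$ at $q\in\{0,1\}^4$).

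The genuine difference is how you evaluate the polynomial prefactors. You plan class-by-class Isserlis/Wick computations with covariance $M^{-1}\otimes I_d$, plus sequential integration of vertices. The paper instead attaches a weight $q_j$ to each edge. The $E$-factor is $e^{q_jx_j^\top x_{j+1}}$ at $q_j=1$. The $Q$-polynomial $1+x_j^\top x_{j+1}+\frac12(x_j^\top x_{j+1})^2$ is $\mathcal L_j e^{q_jx_j^\top x_{j+1}}$ at $q_j=0$, with $\mathcal L_j=1+\partial_{q_j}+\frac12\partial_{q_j}^2$. Hence all six classes come from the single closed form $\mathcal G(q)=\Delta(q)^{-d/2}$, one cyclic $4\times4$ determinant, by at most second-order differentiation in each $q_j$. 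The $d$-dependence $d$, $d(d+2)$, $d(d+2)(d+4)$, $d(d+2)(d+4)(d+6)$ then falls out of differentiating $\Delta^{-d/2}$, with no pairings to count.

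Your route is more elementary and gives the same answers. For the class with one $Q$-edge joining $x_1,x_2$, it yields $P_3=1+d\Sigma_{12}+\frac12\{d(d+1)\Sigma_{12}^2+d\Sigma_{11}\Sigma_{22}\}$, where $\Sigma=M^{-1}$, $\Sigma_{12}=b^4/D_3$ and $\Sigma_{11}\Sigma_{22}=b^2(g^2-b^2)/D_3+b^8/D_3^2$. This reproduces the paper's $P_3$. The cost is that the bookkeeping grows quickly in classes $1$ and $0$, where Gaussian moments of order up to $16$ appear.

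Two of your heuristics need correcting:
\begin{itemize}
\item In the four-$Q$ class only the Gaussian weight factorizes: the $x_j$ are i.i.d.\ $N(0,\sigma^2I_d)$ with $\sigma^2=b/g$. The cyclic polynomial does not factorize into one-edge integrals. You must, for example, integrate out $x_2$ and $x_4$ given $x_1,x_3$, using $\E[(a^\top X)^2(c^\top X)^2]=\sigma^4\{\|a\|^2\|c\|^2+2(a^\top c)^2\}$.
\item The highest-order contribution in that class, $\sigma^{16}d(d+2)(d^2+6d+20)/16$, is quartic in $d$. It is not of the cubic type $d(d+2)(d+8)$ familiar from $\kappa_3$.
\end{itemize}
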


\medskip

\begin{remark}\label{rem:check}
As an independent check on Theorem~\ref{thm:kappa4}, setting
$d=1$ and $\beta=1$ gives
\[
 \kappa_4(1,1)  =  0.001654655083\ldots
\]
and
\[
 \mu_4(1,1)  =  0.002351088558535900\ldots.
\]
These values agree with those obtained independently for the
Epps--Pulley statistic in \cite{henze1990}.
\end{remark}

\medskip

\begin{remark}\label{rem:spectral_power_sum}
By the spectral representation~\eqref{eq:chisquare_rep},
\[
  \kappa_4(d,\beta)
  =
  48\sum_{j\geq1}\lambda_j(\beta,d)^4.
\]
Hence, Theorem~\ref{thm:kappa4} also provides an explicit closed-form
expression for the fourth power sum of the complete BHEP spectrum.
The recent spectral characterization in \cite{ebneretal26}
determines all eigenvalues, including their multiplicities, but some
of them are specified through roots of transcendental equations or
of a Fredholm determinant. Thus, the closed-form expression obtained
here does not require a numerical spectral calculation.
\end{remark}

\section{Four-moment approximations and $p$-values}
\label{sec:approximations}

Put $\mu=\kappa_1$, $\sigma^2=\kappa_2$, and define the standardized skewness and kurtosis by
\begin{equation*}
  \gamma_1(d,\beta)   =   \frac{\kappa_3(d,\beta)}{\kappa_2(d,\beta)^{3/2}},
  \qquad   \beta_2(d,\beta)=3+\frac{\kappa_4(d,\beta)}{\kappa_2(d,\beta)^2}.
\end{equation*}
Thus, the first four moments required for moment matching are
available explicitly as functions of $d$ and $\beta$.

We consider two classical four-moment approximation schemes, namely
the Johnson and Pearson systems of distributions. Both systems were
used in the special case $d=1$, $\beta=1$ in \cite{henze1990}.
Our main interest here is in obtaining an analytic approximation to the whole
distribution function and hence to the upper-tail probability
associated with an observed value of the BHEP statistic.

\subsection{The Johnson system}\label{subsec:johnson}

Johnson \cite{johnson1949} introduced a system of distributions obtained
by transformations to a standard normal random variable. Let
$Z\sim N(0,1)$, and let $T$ denote a random variable whose distribution
is to be fitted to that of $T_\beta(d)$. Depending on the values of
skewness and kurtosis, $T$ belongs to the
lognormal, unbounded, or bounded part of the Johnson system.

On the lognormal boundary, the moment-matched Johnson distribution belongs to the $S_L$ family and coincides with the three-parameter lognormal approximation in Section~\ref{sec:lognormal}. Its upper-tail   probability is given by the corresponding formula there, including the value 1 below its lower support endpoint. 

For the unbounded system $S_U$,
\[
Z=\gamma+\delta\,\operatorname{arsinh} \left(\frac{T-\xi}{\lambda}\right),
\]
whereas for the bounded system $S_B$,
\[
Z=\gamma+\delta\log \left(\frac{T-\xi}{\xi+\lambda-T}\right).
\]
Here, $\gamma\in\mathbb R$, $\delta>0$, $\lambda>0$, and $\xi\in\mathbb R$
are determined by matching the first four moments of $T$ with those of
$T_\beta(d)$. A classical algorithm for fitting Johnson distributions by moments
is given by Hill et al.~\cite{hill1985}.

Let $F_{J,d,\beta}$ denote the distribution function of the
moment-matched Johnson distribution. For an observed value $t$,
we define the Johnson approximation to the upper-tail probability by
\begin{equation*}
  \widehat p_J(t;d,\beta)   =   1-F_{J,d,\beta}(t).
\end{equation*}

Writing $\Phi$ for the distribution function of the standard normal
distribution, for an $S_U$ fit this becomes
\[
\widehat p_J(t;d,\beta)
=
1-\Phi\left[
\gamma+\delta\,\operatorname{arsinh}
\left(\frac{t-\xi}{\lambda}\right)
\right],
\qquad t\in\mathbb R,
\]
whereas an $S_B$ fit gives
\[
\widehat p_J(t;d,\beta)
=
\begin{cases}
1, & t\le \xi,\\[1mm]
1-\Phi\left[
\gamma+\delta\log\left(
\frac{t-\xi}{\xi+\lambda-t}
\right)
\right],
& \xi<t<\xi+\lambda,\\[1mm]
0, & t\ge \xi+\lambda.
\end{cases}
\]

Thus, once $T_{n,\beta}$ has been computed, the additional
computational effort required for obtaining $\widehat p_J$ is
negligible.

\medskip

\begin{remark}
For $d=1$ and $\beta=1$, the construction reduces to that studied
in \cite{henze1990}. In this case the moment-matched Johnson
distribution belongs to the bounded system $S_B$.
\end{remark}

\medskip

\begin{remark}\label{rem:bounded}
The true limiting distribution in~\eqref{eq:chisquare_rep} is
unbounded above, whereas an $S_B$ approximation has bounded support.
This discrepancy may become relevant for very small upper-tail
probabilities. We therefore pay particular attention to the extreme
upper tail in the numerical study below.
\end{remark}

\subsection{The Pearson system}\label{subsec:pearson}

A second four-moment approximation is obtained from the Pearson
system of distributions; see, e.g., \cite{eldertonjohnson69}. A Pearson density $f$ satisfies a differential equation of the form
\begin{equation*}
  \frac{f'(x)}{f(x)}
  =
  -\,\frac{x-a}
          {c_0+c_1x+c_2x^2},
\end{equation*}
with constants determined, up to the usual location and scale
standardization, by the first four moments.

For the moment combinations occurring in our numerical study, the
moment-matched Pearson distribution is either of Type VI or of
Type IV. We use the following parametrizations.

For a Pearson Type VI distribution, let
\[
 T=\eta+\tau Y,\qquad \tau>0,
\]
where $Y$ has a beta-prime distribution with parameters $p>0$ and
$q>4$, that is,
\[
 f_Y(y)  =  \frac{y^{p-1}}{B(p,q)(1+y)^{p+q}},  \qquad y>0.
\]
Its mean and variance are
\[
 \E(Y)=\frac{p}{q-1},
 \qquad
 \operatorname{Var}(Y)
 =  \frac{p(p+q-1)}{(q-2)(q-1)^2}.
\]
The standardized skewness and kurtosis are
\begin{align}
 \gamma_1^{\rm VI}  &=  \frac{2(2p+q-1)\sqrt{q-2}}{(q-3)\sqrt{p(p+q-1)}},
 \label{eq:pearsonVI_skew}
 \\
 \beta_2^{\rm VI}  &=  3+ 
 \frac{6\left\{ p(p+q-1)(5q-11)+(q-1)^2(q-2)\right\}}{p(p+q-1)(q-3)(q-4)}.
 \label{eq:pearsonVI_kurt}
\end{align}
Thus, $p$ and $q$ are determined by matching
\eqref{eq:pearsonVI_skew} and \eqref{eq:pearsonVI_kurt} to
$\gamma_1(d,\beta)$ and $\beta_2(d,\beta)$, respectively. Once
$p$ and $q$ have been obtained, location and scale are given by
\[
 \tau  \ =  \ \frac{\sigma}{\sqrt{\operatorname{Var}(Y)}}, \qquad 
 \eta  \ =  \  \mu-\tau\frac{p}{q-1}.
\]

For a Pearson Type IV distribution, we write again
\[
 T=\eta+\tau Y,\qquad \tau>0,
\]
where $Y$ has density
\begin{equation*}
 f_Y(y)  =  c_{m,\nu}  (1+y^2)^{-m}  \exp\{-\nu\arctan y\},
 \qquad y\in\mathbb R,
\end{equation*}
with $m>5/2$, $\nu\in\mathbb R$, and
\[
 c_{m,\nu}  =  
 \frac{2^{2m-2}\left|\Gamma\left(m+\frac{{\rm i}\nu}{2}\right)\right|^2}
 {\pi\Gamma(2m-1)}.
\]
Put $A_{m,\nu}=4(m-1)^2+\nu^2$. Then
\[
\E(Y)=-\frac{\nu}{2(m-1)}, \qquad 
 \operatorname{Var}(Y)  =  \frac{A_{m,\nu}}{4(m-1)^2(2m-3)}.
\]
Moreover,
\begin{align}
 \gamma_1^{\rm IV}
 &=
 -\frac{2\nu\sqrt{2m-3}}{(m-2)\sqrt{A_{m,\nu}}},
 \label{eq:pearsonIV_skew}
 \\
 \beta_2^{\rm IV}
 &=
 \frac{3(2m-3)\left\{4(m-2)(m-1)^2+(m+2)\nu^2\right\}}{(m-2)(2m-5)A_{m,\nu}}.
 \label{eq:pearsonIV_kurt}
\end{align}
The parameters $m$ and $\nu$ are obtained by matching
\eqref{eq:pearsonIV_skew} and \eqref{eq:pearsonIV_kurt} to
$\gamma_1(d,\beta)$ and $\beta_2(d,\beta)$. The remaining
parameters are then
\[
 \tau  \ = \  \frac{\sigma}
 {\sqrt{\operatorname{Var}(Y)}}, \qquad
\eta  \ =
\  \mu+\tau\frac{\nu}{2(m-1)}.
\]

Let $F_{P,d,\beta}$ denote the distribution function of this
moment-matched Pearson distribution. We define the Pearson
approximation to the upper-tail probability by
\[
  \widehat p_P(t;d,\beta)
  =
  1-F_{P,d,\beta}(t).
\]

In the special case $d=1$ and $\beta=1$, the corresponding Pearson
distribution is of Type~VI; see \cite{henze1990}. In contrast to the
bounded Johnson $S_B$ approximation arising in this case, a
Pearson Type~VI distribution has an unbounded right tail. This
feature may be advantageous when very small $p$-values are of
interest.

The numerical comparison below is intended to determine which of
the two four-moment systems provides the more accurate approximation
for a given combination of $d$ and $\beta$.

\subsection{Lognormal benchmark approximations}
\label{sec:lognormal}

For comparison, we also consider the two- and three-parameter
lognormal approximations used in \cite{henzewagner1997}, denoted by
$L_2$ and $L_3$, respectively. The former matches the first two
moments, whereas the latter matches the first three moments of
$T_\beta(d)$.

For the two-parameter lognormal approximation, put
\begin{equation*}
 s_2^2
 =
 \log\left(1+\frac{\sigma^2}{\mu^2}\right),
 \qquad
 m_2
 =
 \log\mu-\frac{s_2^2}{2}.
\end{equation*}
Then $T_\beta(d)  \approx  \exp(m_2+s_2Z)$,
 where $Z\sim N(0,1)$,
has expectation $\mu$ and variance $\sigma^2$.  The corresponding
approximation to the upper-tail probability is
\begin{equation*}
 \widehat p_{L_2}(t;d,\beta)
 =
 1-\Phi\left(
 \frac{\log t-m_2}{s_2}
 \right),
 \qquad t>0.
\end{equation*}
For the three-parameter lognormal approximation, let $a>1$ be the
unique solution of
\begin{equation*}
 \gamma_1(d,\beta)
 =
 (a+2)\sqrt{a-1}.
\end{equation*}
Put
\begin{equation*}
 s_3=\sqrt{\log a},
 \quad \
 \ell=\frac{\sigma}{\sqrt{a-1}},
 \quad \
 \eta=\mu-\ell,
 \quad \
 m_3=\log\ell-\frac12\log a.
\end{equation*}
Then $T_\beta(d) \approx \eta+\exp(m_3+s_3Z)$, where
 $Z\sim N(0,1)$, has the same expectation, variance and standardized skewness as
$T_\beta(d)$, and therefore matches its first three moments.

The resulting upper-tail approximation is
\begin{equation*}
 \widehat p_{L_3}(t;d,\beta)
 =  1-\Phi\left( \frac{\log(t-\eta)-m_3}{s_3}\right),
 \qquad t>\eta. 
\end{equation*}
For $t\leq\eta$, we set $\widehat p_{L_3}(t;d,\beta)=1$.

The two lognormal approximations are computationally as inexpensive
as the Johnson and Pearson approximations.  They provide useful
benchmarks for assessing the gain obtained by matching the fourth
moment.

\section{Accuracy of the limiting approximations}
\label{sec:limit_accuracy}

The main purpose of the four-moment approximations is to obtain
accurate upper-tail probabilities and, hence, accurate approximate
$p$-values. We therefore assess their performance primarily in terms
of upper-tail probabilities rather than by comparing only a small
number of quantiles.

For $M\in\{J,P,L_2,L_3\}$, let $q_{1-\alpha}^{(M)}(d,\beta)$
denote the $(1-\alpha)$-quantile of the corresponding approximating
distribution. If $T_\beta^{\rm ref}(d)$ denotes a random variable with the highly accurate
reference distribution described in Section~\ref{sec:spectral_reference},
define
\begin{equation*}
R_\alpha^{(M)}(d,\beta)
=
\frac{\mathbb P\left\{
T_\beta^{\rm ref}(d)>q_{1-\alpha}^{(M)}(d,\beta)\right\}}{\alpha}-1,
\quad \ M\in\{J,P,L_2,L_3\}.
\end{equation*}
Thus, $R_\alpha^{(M)}(d,\beta)=0$ corresponds to exact upper-tail
calibration. Positive values indicate liberal behavior at nominal
level $\alpha$, whereas negative values indicate conservative
behavior.

We consider the nominal probabilities
$\alpha\in\{0.10,0.05,0.025,0.01,0.005,$ $0.001\}$,
the dimensions $d\in\{1,2,3,5,10\}$,
and the smoothing parameters $\beta\in\{0.1,0.25,0.5,1,2,3,5\}$.

The Johnson and Pearson four-moment approximations are compared with
the two- and three-parameter lognormal approximations described in
Section~\ref{sec:lognormal}.

\subsection{Moment characteristics}\label{subsec:moment_characteristics}

Table~\ref{tab:moments} displays the standardized skewness and
kurtosis, together with the resulting Johnson and Pearson types, for
selected combinations of $d$ and $\beta$.

\begin{table}[ht]
\centering
\caption{Standardized moments and fitted four-moment distributions
for selected values of $d$ and $\beta$.}
\label{tab:moments}
\smallskip
\begin{tabular}{cccccc}
\toprule
$d$ & $\beta$ & $\gamma_1$ & $\beta_2$
& Johnson type & Pearson type\\
\midrule
1  & 1   & 2.12870 & 10.12769 & $S_B$ & VI\\
2  & 0.5 & 1.56986 &  7.05206 & $S_B$ & VI\\
2  & 1   & 1.24129 &  5.56256 & $S_B$ & VI\\
3  & 1   & 0.86437 &  4.31073 & $S_B$ & VI\\
5  & 1   & 0.51336 &  3.51199 & $S_U$ & IV\\
10 & 3   & 0.00548 &  3.00013 & $S_U$ & IV\\
\bottomrule
\end{tabular}
\end{table}

The parameters of the moment-matched Johnson distributions for the
six representative cases considered in Table~\ref{tab:moments} are
given in Table~\ref{tab:johnson_parameters}. As a check, the values
for $d=1$ and $\beta=1$ agree with those obtained in
\cite{henze1990}.

The parameter values in Tables~\ref{tab:johnson_parameters} 
and~\ref{tab:pearson_parameters} are rounded to six decimal
places for display. All numerical accuracy calculations reported below
were carried out using the unrounded fitted parameters.

\begin{table}[ht]
\centering
\caption{Parameters of the moment-matched Johnson distributions for
selected values of $d$ and $\beta$.}
\label{tab:johnson_parameters}
\smallskip
\begin{tabular}{cccrrrr}
\toprule
$d$ & $\beta$ & Type & $\gamma$ & $\delta$ & $\xi$ & $\lambda$\\
\midrule
 1 & 1   & $S_B$ &   3.552954 &   1.230622 & -0.020682 &  2.266644\\
 2 & 0.5 & $S_B$ &   4.551631 &   1.676188 & -0.001720 &  0.543393\\
 2 & 1   & $S_B$ &   5.456845 &   2.091337 & -0.006193 &  4.038107\\
 3 & 1   & $S_B$ &  10.626002 &   3.292173 &  0.031818 & 10.646894\\
 5 & 1   & $S_U$ &  -6.794478 &   5.183662 &  0.332850 &  0.211960\\
10 & 3   & $S_U$ & -83.358634 & 208.125110 &  0.986552 &  0.032676\\
\bottomrule
\end{tabular}
\end{table}

The corresponding parameters of the moment-matched Pearson
distributions are given in Table~\ref{tab:pearson_parameters}.
For the first four cases the fitted distribution is of Type~VI,
whereas for the last two cases it is of Type~IV. The selected cases
therefore cover both bounded and unbounded Johnson fits and both
Pearson types occurring in the range considered here.

\begin{table}[ht]
\centering
\caption{Parameters of the moment-matched Pearson distributions
for selected values of $d$ and $\beta$.}
\label{tab:pearson_parameters}
\smallskip
\begin{tabular}{ccccccc}
\toprule
$d$ & $\beta$ & Type & $p$ or $m$ & $q$ or $\nu$ & $\eta$ & $\tau$\\
\midrule
1  & 1   & VI & 1.020945 & 42.699009 & 0.012239 & 4.972110\\
2  & 0.5 & VI & 2.200202 & 31.288258 & 0.005780 & 0.430292\\
2  & 1   & VI & 3.735855 & 37.064855 & 0.048217 & 2.394881\\
3  & 1   & VI & 10.663428 & 41.472364 & 0.095231 & 1.364437\\
5  & 1   & IV & 29.902700 & -176.357273 & 0.106502 & 0.195909\\
10 & 3   & IV & 34352.547122 & -26451.585712 & 0.984004 & 0.041526\\
\bottomrule
\end{tabular}
\end{table}

\subsection{Spectral reference distribution}
\label{sec:spectral_reference}

To assess the accuracy of the four analytic approximations, we use
the complete spectral characterization of the limiting BHEP
operator obtained in \cite{ebneretal26}.  Recall that
\[
T_\beta(d)
\stackrel{\mathcal D}{=}
\sum_{j\geq 1}\lambda_j(\beta,d)N_j^2,
\]
where the positive eigenvalues are repeated according to their
multiplicities.

Following the decomposition in \cite{ebneretal26}, the spectrum consists of
the unchanged geometric eigenvalues together with the eigenvalues
arising from the radial, degree-one and degree-two sectors.
For the numerical calculations, the affected block representations
are truncated at dimension $K=180$, while the unchanged geometric
part of the spectrum is included up to total Hermite degree $N=250$.
These cutoffs are chosen sufficiently large that further increases
have no effect on the reported upper-tail probabilities, and the
first four spectral cumulants agree with their closed-form
counterparts to substantially more digits than those reported below.

A particularly useful accuracy check is provided by the cumulant
identities
\begin{equation}
\kappa_r(d,\beta)
=
2^{r-1}(r-1)!
\sum_{j\geq1}\lambda_j(\beta,d)^r, \qquad r=1,2,3,4.
\label{eq:spectral_cumulant_check}
\end{equation}
Thus, in addition to the trace identity, the closed-form expression
for the fourth cumulant obtained in Theorem~\ref{thm:kappa4}
provides an independent high-order check on the numerical spectral
calculation.

If $\lambda_1,\ldots,\lambda_m$ denote the retained eigenvalues,
including multiplicities, we use
\[
T_{\beta,m}^{\rm ref}(d)
=
\sum_{j=1}^m \lambda_j N_j^2
\]
as a numerical reference distribution. For fixed $m$, its
distribution function is evaluated by Imhof's inversion method for
quadratic forms in normal variables; see \cite{imhof61}. The
truncation level $m$ is increased until both the quantities in
\eqref{eq:spectral_cumulant_check} and the resulting upper-tail
probabilities are stable to substantially more digits than those
reported below.

For example, for $d=2$ and $\beta=1$, the relative discrepancies
between the cumulants obtained from the retained eigenvalues and
their closed-form values are
\[
6.6\times10^{-15},\quad
2.2\times10^{-15},\quad
3.9\times10^{-14},\quad
4.5\times10^{-13},
\]
for the first, second, third, and fourth cumulant, respectively.

This also provides a numerical check of the new expression for
$\kappa_4(d,\beta)$.

\subsection{Upper-tail accuracy}
\label{subsec:tail_accuracy}

For each combination of $d$, $\beta$, and $\alpha$, we compare the
nominal upper-tail probability $\alpha$ with the probability obtained
under the reference limiting distribution when the critical value is
taken from one of the approximating distributions.

In addition to the relative calibration errors
$R_\alpha^{(J)}(d,\beta)$ and $R_\alpha^{(P)}(d,\beta)$, we report the
corresponding errors for the two- and three-parameter lognormal
approximations.

\begin{table}[ht]
\centering
\caption{Relative errors of analytic approximations in the upper tail.
The entries are $R_\alpha^{(M)}(d,\beta)$.}
\label{tab:tail_errors}
\smallskip
\begin{tabular}{ccc rrcc}
\toprule
$d$ & $\beta$ & $\alpha$
& Johnson & Pearson
& $L_2$ & $L_3$\\
\midrule
2 & 1 & 0.100 &  0.00158 & -0.00305 &  0.03859 &  0.01904\\
2 & 1 & 0.050 & -0.00028 &  0.00107 &  0.01128 &  0.02822\\
2 & 1 & 0.025 & -0.00281 &  0.00649 & -0.03968 &  0.02850\\
2 & 1 & 0.010 & -0.00594 &  0.01277 & -0.13819 &  0.01130\\
2 & 1 & 0.005 & -0.00727 &  0.01486 & -0.23053 & -0.01669\\
2 & 1 & 0.001 & -0.00294 &  0.00291 & -0.47091 & -0.13174\\
\bottomrule
\end{tabular}
\end{table}

For the pilot case $d=2$, $\beta=1$, both four-moment
approximations are remarkably accurate throughout the range of
nominal probabilities considered. The relative calibration error
of the Johnson approximation remains below one percent in absolute
value, while that of the Pearson approximation remains below about
$1.5\%$. In contrast, the two-parameter lognormal approximation
deteriorates markedly in the extreme upper tail. The three-parameter
lognormal approximation performs considerably better, but is also
less accurate than the four-moment approximations at the smallest
nominal probabilities considered.

To assess whether this behavior is representative, we next consider
all six combinations of $d$ and $\beta$ listed in Table~\ref{tab:moments}. For each approximation, Table~\ref{tab:max_tail_errors} reports the maximum absolute relative calibration error
\[
\max_{\alpha\in\{0.10,0.05,0.025,0.01,0.005,0.001\}}
\left|R_\alpha^{(M)}(d,\beta)\right|.
\]

\begin{table}[ht]
\centering
\caption{Maximum absolute relative calibration errors over
$\alpha\in\{0.10,0.05,0.025,0.01,0.005,0.001\}$ for the
representative cases.}
\label{tab:max_tail_errors}
\smallskip
\begin{tabular}{ccrrcc}
\toprule
$d$ & $\beta$ & Johnson & Pearson
& $L_2$ & $L_3$\\
\midrule
 1 & 1   & 0.05486 & 0.01405 & 0.79237 & 0.20432\\
 2 & 0.5 & 0.01702 & 0.03148 & 0.58901 & 0.16204\\
 2 & 1   & 0.00727 & 0.01486 & 0.47091 & 0.13174\\
 3 & 1   & 0.00433 & 0.00905 & 0.02018 & 0.03951\\
 5 & 1   & 0.00366 & 0.00283 & 0.74417 & 0.05699\\
10 & 3   & $4.37\times10^{-6}$ & $4.11\times10^{-6}$
           & 0.02425 & $2.27\times10^{-4}$\\
\bottomrule
\end{tabular}
\end{table}

The results show that both four-moment approximations provide very
accurate upper-tail calibration for the representative cases.
With the exception of the bounded Johnson $S_B$ approximation for
$d=\beta=1$, the maximum absolute relative error is below about
$3.2\%$, and in most cases it is substantially smaller. For
$d=\beta=1$, the Pearson Type~VI approximation is clearly more
accurate in the extreme upper tail, which is consistent with the
unbounded support of the true limiting distribution. The
two-parameter lognormal approximation may deteriorate severely in
the extreme tail, whereas the three-parameter lognormal
approximation performs considerably better, but is generally less
accurate than the four-moment fits.

To summarize the performance of the two four-moment approximations
over the full grid, put, for $M \in \{J,P,L_2,L_3\}$,
\[
E_M(d,\beta)
=
\max_{\alpha\in\{0.10,0.05,0.025,0.01,0.005,0.001\}}
\left|R_\alpha^{(M)}(d,\beta)\right|.
\]
Table~\ref{tab:full_grid_best} reports, for each pair $(d,\beta)$,
the better of the Johnson and Pearson approximations. An entry
$J(x)$ or $P(x)$ means that the corresponding approximation has the
smaller value of $E_M(d,\beta)$, with $100E_M(d,\beta)=x$ expressed
as a percentage.

\begin{table}[ht]
\centering
\caption{Better four-moment approximation over the full grid.
The number in parentheses is the maximum absolute relative
calibration error, in percent, over the six nominal levels.}
\label{tab:full_grid_best}
\medskip
\small
\setlength{\tabcolsep}{3pt}
\renewcommand{\arraystretch}{1.08}

\begin{tabular}{c@{\hspace{5pt}}ccccccc}
\toprule
& \multicolumn{7}{c}{$\beta$}\\
\cmidrule(lr){2-8}
$d$
& $0.1$ & $0.25$ & $0.5$ & $1$ & $2$ & $3$ & $5$\\
\midrule
 1 & $P(0.03)$ & $P(0.67)$ & $P(2.77)$ & $P(1.40)$
   & $P(2.28)$ & $J(1.58)$ & $J(0.60)$\\
 2 & $P(1.93)$ & $P(2.27)$ & $J(1.70)$ & $J(0.73)$
   & $J(0.34)$ & $J(0.33)$ & $J(0.08)$\\
 3 & $J(1.20)$ & $J(1.23)$ & $J(1.05)$ & $J(0.43)$
   & $J(0.17)$ & $P(0.05)$ & $P(0.15)$\\
 5 & $J(1.17)$ & $J(1.26)$ & $J(1.09)$ & $P(0.28)$
   & $P(0.35)$ & $P(0.17)$ & $P(0.01)$\\
10 & $J(1.19)$ & $J(1.05)$ & $J(0.29)$ & $P(0.57)$
   & $P(0.04)$ & $P(<.001)$ & $J(<.001)$\\
\bottomrule
\end{tabular}
\end{table}

Table~\ref{tab:full_grid_best} shows that neither four-moment
approximation uniformly dominates the other. The Johnson
approximation has the smaller maximum calibration error in 19 of
the 35 parameter combinations, whereas the Pearson approximation
is preferable in the remaining 16 cases. More importantly, for
every pair $(d,\beta)$ in the grid, at least one of the two
four-moment approximations has a maximum absolute relative
calibration error below $2.8\%$. More precisely,
\[
\max_{(d,\beta)}
\min\{E_J(d,\beta),E_P(d,\beta)\}
\approx 0.0277,
\]
the maximum being attained for $d=1$ and $\beta=0.5$.

The full-grid comparison with the lognormal benchmarks is much less
favorable to the latter. Over all 35 combinations of $d$ and
$\beta$, the largest values of $E_M(d,\beta)$ are approximately
$0.1089$ for the Johnson approximation and $0.0315$ for the Pearson
approximation, compared with $1.2314$ for the two-parameter
lognormal approximation and $0.2269$ for the three-parameter
lognormal approximation. Thus, matching the fourth moment leads to
a substantial improvement in worst-case upper-tail calibration.

If a single approximation is to be used throughout the parameter
range considered here, the Pearson approximation exhibits the more
uniform worst-case behavior. On the other hand, allowing the choice
between the Johnson and Pearson fits according to $(d,\beta)$ yields
a highly accurate approximation over the entire grid.

\section{Finite-sample $p$-values} \label{sec:finite_sample}

An accurate approximation to the limiting distribution does not
necessarily imply an equally accurate approximation for finite sample
sizes. We therefore investigate directly the finite-sample behavior of
the Johnson and Pearson $p$-value approximations.

Owing to affine invariance, it suffices to generate samples from
$N_d(0,I_d)$, the $d$-variate standard normal distribution. For each simulated sample we calculate the standardized
residuals $Y_1,\ldots,Y_n$ and evaluate the BHEP statistic in the
closed form
\[
\begin{split}
T_{n,\beta}
={}&
\frac{1}{n}
\sum_{j,k=1}^n
\exp\left\{
-\frac{\beta^2}{2}\|Y_j-Y_k\|^2
\right\}\\
&{}
-2(1+\beta^2)^{-d/2}
\sum_{j=1}^n
\exp\left\{
-\frac{\beta^2}{2(1+\beta^2)}
\|Y_j\|^2
\right\}\\
&{}
+n(1+2\beta^2)^{-d/2}.
\end{split}
\]
For each realization, the corresponding approximate $p$-values
\[
\widehat p_J = \widehat p_J(T_{n,\beta};d,\beta),
\qquad \widehat p_P = \widehat p_P(T_{n,\beta};d,\beta)
\]
are computed from the moment-matched Johnson and Pearson
distributions described in Section~\ref{sec:approximations}. The simulation study covers $n\in\{20,50,100,200,500\}$, $d\in\{1,2,3,5,10\}$, and $\beta\in\{0.1,0.25,0.5,1,2,3,5\}$.

For each combination of $n$ and $d$, the same simulated samples are
used for all seven values of $\beta$. We use $50\,000$ Monte Carlo
replications for $n\leq200$ and $20\,000$ replications for $n=500$.
For a nominal level $\alpha$, the finite-sample rejection probability
is estimated by
\[
\widehat\alpha_{M,n}(d,\beta)
=
\frac{1}{B_n} \sum_{r=1}^{B_n} \mathbf{1}
\left\{\widehat p_{M,r}\leq\alpha \right\}, \qquad M\in\{J,P\},
\]
where $B_n=50\,000$ for $n\leq200$ and $B_{500}=20\,000$.
For orientation, at nominal level $0.05$ the binomial Monte Carlo
standard error is approximately $0.0010$ for $50\,000$ replications
and $0.0015$ for $20\,000$ replications.

Table~\ref{tab:finite_005} reports empirical null rejection
probabilities at $\alpha=0.05$ for six
representative cases used in Section~\ref{sec:limit_accuracy}.
Each entry gives Johnson/Pearson.
Thus, for each simulated sample under the normal null hypothesis,
we reject whenever the approximate Johnson or Pearson $p$-value,
respectively, does not exceed the nominal level $\alpha$.

\begin{table}[ht]
\centering
\caption{Empirical rejection probabilities at nominal level
$\alpha=0.05$ for selected values of $d$ and $\beta$.
Each entry is Johnson/Pearson.}
\label{tab:finite_005}

\setlength{\tabcolsep}{3pt}
\renewcommand{\arraystretch}{1.08}
\smallskip
\begin{tabular}{ccccccc}
\toprule
& & \multicolumn{5}{c}{$n$}\\
\cmidrule(lr){3-7}
$d$ & $\beta$
& $20$ & $50$ & $100$ & $200$ & $500$\\
\midrule
1 & 1
& .0479/.0469
& .0488/.0478
& .0495/.0484
& .0516/.0507
& .0511/.0501\\

2 & 0.5
& .0363/.0362
& .0457/.0455
& .0486/.0484
& .0495/.0494
& .0503/.0502\\

2 & 1
& .0427/.0428
& .0475/.0475
& .0476/.0476
& .0496/.0497
& .0482/.0483\\

3 & 1
& .0399/.0400
& .0458/.0459
& .0476/.0477
& .0482/.0483
& .0489/.0490\\

5 & 1
& .0399/.0399
& .0469/.0469
& .0485/.0485
& .0492/.0492
& .0529/.0530\\

10 & 3
& .0005/.0005
& .0065/.0065
& .0166/.0166
& .0341/.0341
& .0551/.0551\\
\bottomrule
\end{tabular}
\end{table}

For the first five representative cases, the empirical rejection
probabilities approach their limiting values quite rapidly, and these
limiting values are close to the nominal level.
 The finite-sample tests tend to be slightly
conservative for small $n$, but for most of these parameter
combinations the empirical rejection probabilities are already close
to $0.05$ for $n=50$ or $n=100$. In the classical case $d=\beta=1$,
the finite-sample calibration is remarkably accurate even for
$n=20$.
The behavior can, however, be quite different in higher dimensions.
For $d=10$ and $\beta=3$, for example, the $5\%$ rejection
probability increases from approximately $0.0005$ at $n=20$ to
$0.0341$ at $n=200$ and $0.0551$ at $n=500$. The convergence in the
more extreme upper tail can be still slower. At nominal level
$\alpha=0.01$, the corresponding Pearson rejection probabilities for
$n=20,50,100,200,500$ are, respectively,
\[
0.0004,\quad
0.0046,\quad
0.0115,\quad
0.0212,\quad
0.0321.
\]
The Johnson values agree with these numbers to the displayed
precision. The approach to the limiting rejection probability need not be
monotone over the sample sizes considered. For example, in this case
the empirical rejection probability exceeds the nominal level for
$n\geq 100$ and reaches $0.0321$ at $n=500$. 

The full grid confirms that this slow convergence is not restricted
to a single parameter combination. In particular, for $d=10$ and
$\beta=5$, the empirical rejection probability at nominal level
$0.05$ is only about $0.0104$ even for $n=500$. Small values of
$\beta$ may also lead to slow convergence in high dimension; for
$d=10$ and $\beta=0.1$, the corresponding rejection probabilities
at level $0.05$ are approximately
\[
0.0000,\quad
0.0139,\quad
0.0313,\quad
0.0409,\quad
0.0461
\]
for $n=20,50,100,200,500$, respectively.

Nevertheless, the overall finite-sample behavior improves markedly
with increasing sample size. At $n=500$, 34 of the 35 combinations
of $(d,\beta)$ have empirical rejection probabilities between
$0.04$ and $0.06$ at nominal level $0.05$ for both four-moment
approximations; the only exception is $(d,\beta)=(10,5)$.
At nominal level $0.01$, 32 of the 35 combinations lie between
$0.005$ and $0.015$. The remaining cases are
$(d,\beta)=(5,5)$, $(10,2)$ and $(10,3)$, with empirical Pearson
rejection probabilities $0.0166$, $0.0164$ and $0.0321$,
respectively.

An important feature of the simulation results is that the Johnson
and Pearson finite-sample rejection probabilities are generally very
close. At nominal level $0.05$, their difference never exceeds about
$0.0026$ over the complete simulation grid, and at level $0.01$ it
never exceeds about $0.0006$. Thus, the more substantial
finite-sample discrepancies observed for some combinations of $d$,
$\beta$, and $n$ cannot primarily be attributed to the choice of the
four-moment approximation. Rather, they reflect the rate at which
the finite-sample distribution of $T_{n,\beta}$ approaches its
limiting null distribution.

In summary, the four-moment approximations lead to well-calibrated
finite-sample tests for a broad range of practically relevant
parameter combinations, often already for moderate sample sizes.
The simulation results also show, however, that finite-sample
calibration varies considerably with $d$ and $\beta$.
In particular, larger dimensions combined with smoothing parameters
far from the central range may require substantially larger sample
sizes before asymptotic $p$-values can be used with high accuracy.

\section{Discussion}\label{sec:discussion}

The main theoretical contribution of this paper is an explicit
closed-form expression for the fourth cumulant of the limiting null
distribution of the BHEP statistic for arbitrary dimension $d\geq1$
and smoothing parameter $\beta>0$. Together with the previously known
first three cumulants, this makes four-moment fitting available for the
whole BHEP family.

From a practical point of view, the main benefit is that the resulting
Johnson and Pearson approximations provide approximations to the entire
limiting null distribution rather than only to a collection of critical
values. Consequently, once $d$, $\beta$, and the observed value of the
BHEP statistic are given, approximate $p$-values can be obtained
essentially instantaneously. No simulation and no numerical calculation
of the eigenvalues of the covariance operator are required at the
application stage. The recently obtained complete spectrum is used here
only to construct highly accurate reference distributions against which
the moment-based approximations can be assessed.

The numerical results show that both four-moment approximations are
highly accurate over a broad range of dimensions, smoothing parameters,
and upper-tail probabilities. Neither the Johnson nor the Pearson
approximation uniformly dominates the other: Johnson has the smaller
maximum calibration error in 19 of the 35 parameter combinations
considered, whereas Pearson is preferable in the remaining 16.
For every pair $(d,\beta)$ in the grid, however, at least one of the
two approximations has a maximum absolute relative calibration error
below $2.8\%$. If a single approximation is to be used throughout the
parameter range, the Pearson approximation has the more uniform
worst-case behavior. Its maximum calibration error over the full grid
is about $3.15\%$, compared with about $10.9\%$ for the Johnson
approximation.

The comparison with the two- and three-parameter lognormal
approximations shows that the use of the fourth moment can lead to a
substantial improvement in upper-tail calibration. The improvement is
particularly pronounced in parameter regions in which the simpler
lognormal approximations have difficulty reproducing the extreme upper
tail. The bounded Johnson $S_B$ family also illustrates a structural
limitation of moment matching: although it may approximate the relevant
part of the distribution very accurately, its bounded support cannot
reproduce the unbounded right tail of the true limiting distribution.

The finite-sample simulations add an important qualification to these
asymptotic results. For many combinations of $d$ and $\beta$, the
resulting tests are already well calibrated for moderate sample sizes.

The finite-sample accuracy, however, varies considerably with $d$
and $\beta$. In particular, in higher dimensions and for smoothing
parameters far from the central range, substantially larger sample
sizes may be required before asymptotic $p$-values attain their
limiting calibration.
 The fact that the Johnson and Pearson rejection
probabilities are then nearly identical indicates that this phenomenon
is primarily due to the rate of convergence of $T_{n,\beta}$ to its
limiting distribution, rather than to the four-moment approximation
itself.

Thus, the fourth cumulant provides additional information about the limiting BHEP distribution and, more importantly for applications, enables a simple and effective analytic $p$-value approximation.  A natural next step would be to investigate whether finite-sample corrections can improve the calibration in those regions of $(d,\beta)$ for which
convergence to the limiting distribution is comparatively slow.

\section*{Software availability}
The Johnson and Pearson approximations developed in this paper are
implemented in version~1.4 of the \textsf{R} package \texttt{mnt}, see \cite{butschebner2026}, available on the \texttt{CRAN} archive.
The function \texttt{test.BHEP} allows users to choose between Monte
Carlo calibration and either of the two four-moment approximations.
For the latter, the fitted distribution matches the first four moments
of the limiting null distribution for the specified dimension and
smoothing parameter, providing approximate critical values and
$p$-values without repeated simulation. 

\appendix

\section{Closed form of the fourth cumulant}
\label{sec:appendix_kappa4}

Put $ b=\beta^2$, $g=1+2b$, and write
\[
 J_4(d,\beta)=\frac{\kappa_4(d,\beta)}{48}.
\]

To evaluate $J_4(d,\beta)$, we use a Gaussian generating function.
For $q=(q_1,q_2,q_3,q_4)\in[0,1]^4$, put $x_5=x_1$ and define
\begin{align}
\mathcal G(q)
&=
(2\pi b)^{-2d}
\int_{(\mathbb R^d)^4}
\exp\left\{
-\frac{g}{2b}\sum_{j=1}^4\|x_j\|^2
+\sum_{j=1}^4q_jx_j^\top x_{j+1}
\right\}
\,\D x_1\cdots \D x_4 .
\label{eq:Ggenerator}
\end{align}

\begin{lemma}\label{lem:gaussian_generator}
For $q\in[0,1]^4$, we have
\begin{equation*}
 \mathcal G(q)=\Delta(q)^{-d/2},
\end{equation*}
where
\begin{equation*}
 \Delta(q)
 =
 g^4-b^2g^2\sum_{j=1}^4q_j^2
 +b^4(q_1q_3-q_2q_4)^2.
\end{equation*}
\end{lemma}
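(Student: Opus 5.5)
The integral factorizes over coordinates, so I will prove the case $d=1$ and raise the result to the power $d$. Write $x_j=(x_{j1},\ldots,x_{jd})^\top$. The exponent in \eqref{eq:Ggenerator} is
\[
-\tfrac12\sum_{k=1}^d \xi_k^\top M(q)\,\xi_k ,\qquad \xi_k=(x_{1k},x_{2k},x_{3k},x_{4k})^\top,
\]
where $M(q)$ is the symmetric $4\times4$ circulant-type matrix with diagonal entries $g/b$, entries $-q_j$ in positions $(j,j+1)$ and $(j+1,j)$ (indices mod $4$), and zeros in positions $(1,3)$ and $(2,4)$. The standard Gaussian integral then gives
\[
\mathcal G(q)=\bigl\{(2\pi b)^{-2}(2\pi)^{2}\det M(q)^{-1/2}\bigr\}^{d}
=\bigl\{b^4\det M(q)\bigr\}^{-d/2}.
\]
This requires $M(q)$ to be positive definite. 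That holds for $q\in[0,1]^4$: by Gershgorin, every eigenvalue is at least $g/b-2=(1+2b)/b-2=1/b>0$.

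It remains to show $b^4\det M(q)=\Delta(q)$. Equivalently, the matrix $bM(q)$, which has diagonal entries $g$ and off-diagonal entries $-bq_j$ on the 4-cycle, has determinant $\Delta(q)$. I would expand this determinant as a sum over permutations of $\{1,2,3,4\}$ supported on the edges of the 4-cycle, that is, over matchings and cycle covers of $C_4$ with loops.

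The contributions are as follows.
\begin{itemize}
\item The identity gives $g^4$.
\item Each single transposition along an edge $j$ gives $-g^2b^2q_j^2$.
\item The two perfect matchings $\{12,34\}$ and $\{23,41\}$ give $b^4q_1^2q_3^2$ and $b^4q_2^2q_4^2$. These are products of two transpositions, so the sign is $+$.
\item The two oriented 4-cycles each have sign $-1$ and weight $(-b)^4q_1q_2q_3q_4$, together contributing $-2b^4q_1q_2q_3q_4$.
\end{itemize}
No 3-cycles occur, because the graph $C_4$ contains no triangle. Summing, the $b^4$ terms combine to $b^4(q_1q_3-q_2q_4)^2$, which gives exactly $\Delta(q)$.

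The main care needed is the sign bookkeeping in the 4-cycle terms and getting the normalizing constant right. The constants cancel as $(2\pi b)^{-2}(2\pi)^2=b^{-2}$ per coordinate, and combining this with $\det M^{-1/2}$ yields $(b^4\det M)^{-1/2}$. The remaining steps are routine.
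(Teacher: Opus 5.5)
Your proof is correct and follows essentially the same route as the paper: factorize over coordinates, secure positive definiteness by diagonal dominance (your Gershgorin bound $g/b-2=1/b>0$ for $M(q)=C(q)/b$ is the same observation), apply the Gaussian integral to get $\det C(q)^{-d/2}$, and then evaluate the determinant. Your permutation expansion over the edges of the $4$-cycle, in which the two oriented $4$-cycles contribute $-2b^4q_1q_2q_3q_4$, simply spells out the ``direct calculation'' that the paper leaves to the reader.
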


\smallskip
\begin{proof}
For one coordinate, the quadratic form in
\eqref{eq:Ggenerator} can be written as
\[
 -\frac{1}{2b}x^\top C(q)x,
\]
where
\[
C(q)=
\begin{pmatrix}
g       & -bq_1 & 0       & -bq_4\\
-bq_1   & g     & -bq_2   & 0\\
0       & -bq_2 & g       & -bq_3\\
-bq_4   & 0     & -bq_3   & g
\end{pmatrix}.
\]
Since $q_j\in[0,1]$ and $g=1+2b$, the symmetric matrix
$C(q)$ is strictly diagonally dominant with positive diagonal
entries and is therefore positive definite. 
Hence, by the standard multivariate Gaussian integral,
\[
 \mathcal G(q)=\det C(q)^{-d/2}.
\]
A direct calculation gives
\[
 \det C(q)  =  g^4-b^2g^2(q_1^2+q_2^2+q_3^2+q_4^2)
 +b^4(q_1q_3-q_2q_4)^2,
\]
which proves the assertion.
\end{proof}

For later use, we introduce the following special values of
$\Delta(q)$:
\begin{align*}
D_4
&:=\Delta(1,1,1,1)
 =g^2(1+4b),
\\
D_3
&:=\Delta(0,1,1,1)
 =(1+3b+b^2)(1+5b+5b^2),
\\
D_{2a}
&:=\Delta(0,0,1,1)
 =g^2(1+4b+2b^2),
\\
D_{2o}
&:=\Delta(0,1,0,1)
 =(1+b)^2(1+3b)^2,
\\
D_1
&:=\Delta(0,0,0,1)
 =g^2(1+b)(1+3b),
\\
D_0
&:=\Delta(0,0,0,0)
 =g^4.
\end{align*}

Differentiation under the integral sign is justified by dominated
convergence, since $C(q)$ is uniformly positive definite for
$q\in[0,1]^4$.

For $j=1,\ldots,4$, introduce the differential operator
\begin{equation*}
 \mathcal L_j
 =
 1+\frac{\partial}{\partial q_j}
 +\frac12\frac{\partial^2}{\partial q_j^2}.
\end{equation*}
Since
\[
 \left.
 \mathcal L_j
 \exp\{q_jx_j^\top x_{j+1}\}
 \right|_{q_j=0}
 =
 1+x_j^\top x_{j+1}
 +\frac12(x_j^\top x_{j+1})^2,
\]
the polynomial part of the covariance kernel can be generated by
differentiation of $\mathcal G$.
More precisely, let $\mathcal I=\{1,2,3,4\}$,
and, for $E\subset\mathcal I$, define
$q^E=(q_1^E,\ldots,q_4^E)$ by $q_j^E=\mathbf 1\{j\in E\}$.
Here $E$ represents the set of factors in which the exponential part
\[
 \exp\left(-\frac{\|x_j-x_{j+1}\|^2}{2}\right)
\]
of the covariance kernel is selected. Expanding the product of the
four covariance kernels therefore gives
\begin{equation}
 J_4(d,\beta)  =  \sum_{E\subset\mathcal I}
 (-1)^{4-|E|}  \left.  \left(  \prod_{j\notin E}\mathcal L_j  \right)
 \mathcal G(q)  \right|_{q=q^E}. 
 \label{eq:J4subset}
\end{equation}

By the cyclic and reflection symmetries of the four-cycle, the
$16$ terms in \eqref{eq:J4subset} fall into six classes, denoted by
$4$, $3$, $2a$, $2o$, $1$,  and $0$, where $2a$ and $2o$ refer to two adjacent and two opposite
exponential factors, respectively. Representative terms are
\begin{align}
 \mathcal G(1,1,1,1)
 &=
 D_4^{-d/2},
\label{eq:class4}
\\
 \left.
 \mathcal L_1\mathcal G(q)
 \right|_{q=(0,1,1,1)}
 &=
 D_3^{-d/2}P_3,
\label{eq:class3}
\\
 \left.
 \mathcal L_1\mathcal L_2\mathcal G(q)
 \right|_{q=(0,0,1,1)}
 &=
 D_{2a}^{-d/2}P_{2a},
\label{eq:class2a}
\\
 \left.
 \mathcal L_1\mathcal L_3\mathcal G(q)
 \right|_{q=(0,1,0,1)}
 &=
 D_{2o}^{-d/2}P_{2o},
\label{eq:class2o}
\\
 \left.
 \mathcal L_1\mathcal L_2\mathcal L_3\mathcal G(q)
 \right|_{q=(0,0,0,1)}
 &=
 D_1^{-d/2}P_1,
\label{eq:class1}
\\
 \left.
 \mathcal L_1\mathcal L_2\mathcal L_3\mathcal L_4
 \mathcal G(q)
 \right|_{q=(0,0,0,0)}
 &=
 D_0^{-d/2}P_0.
\label{eq:class0}
\end{align}

There are, respectively,
\[
 1,\quad4,\quad4,\quad2,\quad4,\quad1
\]
terms of these six types. Taking the signs in
\eqref{eq:J4subset} into account therefore yields
\begin{equation}
\begin{split}
 J_4(d,\beta)
 &=
 D_4^{-d/2}
 -4D_3^{-d/2}P_3
 +4D_{2a}^{-d/2}P_{2a}
\\
 &\quad
 +2D_{2o}^{-d/2}P_{2o}
 -4D_1^{-d/2}P_1
 +D_0^{-d/2}P_0.
\end{split}
\label{eq:J4}
\end{equation}

The quantities $P_3,P_{2a},P_{2o},P_1$ and $P_0$ appearing in
\eqref{eq:class3}--\eqref{eq:class0} are obtained by differentiating
$\Delta(q)^{-d/2}$. Explicitly,
\begin{align*}
P_3
&=
1+
\frac{d b^2(1+4b+5b^2)}{2D_3}
+
\frac{d(d+2)b^8}{2D_3^2},
\\[1mm]
P_{2a}
&=
1+
\frac{d b^2g^2}{D_{2a}}
+
\frac{d(d+2)b^4
(1+8b+22b^2+24b^3+11b^4)}
{4D_{2a}^2},
\\[1mm]
P_{2o}
&=
1+
\frac{d b^2(2+8b+9b^2)}{2D_{2o}}
+
\frac{d(d+2)b^4
(1+8b+24b^2+32b^3+18b^4)}
{4D_{2o}^2},
\\[1mm]
P_1
&=
1+
\frac{3d b^2g^2}{2D_1}
+
\frac{3d(d+2)b^4
(1+8b+23b^2+28b^3+13b^4)}
{4D_1^2}
\nonumber\\
&\quad+
\frac{d(d+2)(d+4)b^6g^2}
{8D_1^2},
\\[1mm]
P_0
&=
1+\frac{2db^2}{g^2}
+
\frac{d(d+2)b^4
(6+48b+140b^2+176b^3+83b^4)}
{4g^8}
\nonumber\\
&\quad+
\frac{d(d+2)(d+4)b^6
(2+8b+7b^2)}
{4g^8}
+
\frac{d(d+2)(d+4)(d+6)b^8}
{16g^8}.
\end{align*}

Combining \eqref{eq:J4} with
$\kappa_4(d,\beta)=48J_4(d,\beta)$ 
gives the explicit closed-form expression for the fourth cumulant
stated in Theorem~\ref{thm:kappa4}. This completes the proof.

\medskip

\begin{remark}\label{rem:kappa4_check}
For $d=1$ and $\beta=1$, substitution into
\eqref{eq:J4} gives
\[
 \kappa_4(1,1)
 =
 0.001654655083\ldots,
\]
and hence
\[
 \mu_4(1,1)
 =
 \kappa_4(1,1)+3\kappa_2(1,1)^2
 =
 0.002351088558535900\ldots.
\]
Both values agree with the independent calculation in
\cite{henze1990}.
\end{remark}

\section*{Acknowledgement}

OpenAI's ChatGPT was used as an AI-assisted research tool in the
preparation of this work. In particular, the key formula underlying
the derivation of the fourth cumulant in Theorem~\ref{thm:kappa4} was first
suggested by ChatGPT and was subsequently developed, checked, and
independently verified by the authors. ChatGPT was also used for
assistance with exposition, \LaTeX{} formulation, and consistency
checks. The authors take full responsibility for all results and
conclusions in the paper.

\end{document}